\documentclass[reqno]{amsart} 
\usepackage{xcolor}

\topmargin  = 0.0 in
\leftmargin = 0.9 in
\rightmargin = 1.0 in
\evensidemargin = -0.10 in
\oddsidemargin =  0.10 in
\textheight = 8.5 in
\textwidth  = 6.6 in
\setlength{\parskip}{2mm}
\setlength{\parindent}{0mm}

\definecolor{darkgreen}{rgb}{0,0.5,0}
 
\usepackage{cite}
\usepackage{amsmath,amssymb,amsfonts, amsthm}
\usepackage{algorithmic}
\usepackage{algorithm}
\usepackage{graphicx}
\usepackage{subcaption}
\usepackage{tikz}
\usepackage[colorlinks=true, allcolors=blue, linktocpage=true]{hyperref}
\usepackage{textcomp}
\usepackage{mathtools}
\usepackage{enumerate}
\usepackage{xfrac}
\usepackage{upgreek}

\definecolor{BBlue}{cmyk}{.98,0.10,0,.25}
\definecolor{LightBlue}{rgb}{0.2,0.6,1}
\definecolor{DarkBlue}{rgb}{0,0,.5}
\definecolor{DarkGreen}{rgb}{0,0.4,.2}
\definecolor{LightGreen}{rgb}{0.6,1,.4}

\def\BibTeX{{\rm B\kern-.05em{\sc i\kern-.025em b}\kern-.08em
    T\kern-.1667em\lower.7ex\hbox{E}\kern-.125emX}}

\newcommand{\real}{{\mathbb{R}}}
\newcommand{\hodclf}{\textsf{g-dclf}}
\newcommand{\adc}{\textsf{adc}}

\newcommand{\interior}{\operatorname{int}}
\newcommand{\boldnu}{\boldsymbol{\nu}}
\newcommand{\boldu}{\mathbf{u}}
\newcommand{\initimetimev}{k_0}
\newcommand\subscr[2]{#1_{\textup{#2}}}
\newcommand\ld{\subscr{\ell}{decr}}
\DeclareMathOperator{\vect}{vec}

\newtheorem{theorem}{Theorem}[section]

\newtheorem{lemma}[theorem]{Lemma}
\newtheorem{definition}[theorem]{Definition}

\newtheorem{problem}[theorem]{Problem}

\DeclareMathOperator{\trace}{trace}

\makeatletter
\renewcommand\subsection{\@startsection{subsection}{2}%
  \z@{-.5\linespacing\@plus-.7\linespacing}{.5\linespacing}%
  {\normalfont\scshape}}
\renewcommand\subsubsection{\@startsection{subsubsection}{3}%
  \z@{.5\linespacing\@plus.7\linespacing}{-.5em}%
  {\normalfont\scshape}}
\makeatother
    
\begin{document}
\title[]{Flexible-step MPC for Switched Linear Systems with No Quadratic Common Lyapunov Function}

\author[Annika F\"urnsinn]{Annika F\"urnsinn}
\address{Department of Mathematics and Statistics\\ Queen's University, Kingston, ON K7L 3N6, Canada}
\email{19af7@queensu.ca}
\author[Christian Ebenbauer ]{Christian Ebenbauer }
\address{Chair of Intelligent Control Systems\\
RWTH Aachen University, 52062 Aachen, Germany}
\email{christian.ebenbauer@ic.rwth-aachen.de}
\author[Bahman Gharesifard ]{Bahman Gharesifard }
\address{Department of Mathematics and Statistics\\ Queen's University, Kingston, ON K7L 3N6, Canada}
\email{bahman.gharesifard@queensu.ca}

\begin{abstract}
In this paper, we develop a systematic method for constructing a generalized discrete-time control Lyapunov function for the flexible-step Model Predictive Control (MPC) scheme, recently introduced in~\cite{AF-CE-BG:22}, when restricted to the class of linear systems. Specifically, we show that a set of Linear Matrix Inequalities (LMIs) can be used for this purpose, demonstrating its tractability. 
The main consequence of this LMI formulation is that, when combined with flexible-step MPC, we can effectively stabilize switched control systems, for which no quadratic common Lyapunov function exists. 
\end{abstract}

\maketitle


\section{Introduction}
Model predictive control (MPC) is a widely utilized method to approximate the solution of an infinite-horizon optimal control problem, where the underlying dynamics can be of nonlinear or linear nature. In this paper, we focus on the latter. The method itself consists of considering a sequence of finite-horizon optimal control problems implemented in a receding horizon fashion. The performance and stability properties of MPC schemes heavily rely on terminal conditions~\cite{DM-JR-CR-PS:00, DM:14,JR-DM-MD:17, LG-JP:17}; in fact, in the standard scheme, the optimal value function is chosen as the Lyapunov function and stability is ensured by enforcing a one-step descent of this optimal value function.

In our recent work~\cite{AF-CE-BG:22}, we proposed the flexible-step MPC scheme, which notably allows for implementing a flexible number of control inputs in each iteration in a computationally attractive manner. Here, with flexible number we mean that in each iteration of the scheme the number of implemented elements of the corresponding optimal input sequence is not constant, e.g. one (standard MPC), but instead flexible (variable). In order to provide stability guarantees, we introduced the notion of generalized discrete-time control Lyapunov functions (\hodclf s). As opposed to the strict descent required of classical Lyapunov functions, this notion only requires a descent of the averaged values of these functions over a maximal allowable implementation window.  We incorporate this average decrease condition as a constraint in our MPC formulation. As demonstrated in our prior work~\cite{AF-CE-BG:22, AF-CE-BG:23}, the flexible-step implementation of our scheme has many advantages; for example, it can be used for stabilizing nonholonomic systems, where it is known that standard MPC may lack asymptotic convergence~\cite{MM-KW:17}.

A characteristic of Lyapunov theory is its reliance on a suitable Lyapunov function.   
In spite of its powerful properties, the same is true for flexible-step MPC; here, we need to find a suitable \hodclf. The purpose of the current paper is to provide a systematic way of constructing such functions for the class of linear control systems. In particular, we show that a simple quadratic function can be selected as the \hodclf. In correspondence with the related non-monotonic Lyapunov functions~\cite{AA-PP:08}, we can cast the average decrease constraint (\adc) as an LMI, which can then be efficiently solved. More importantly, we show that by choosing a large enough time step, this LMI always admits a solution. 

One advantage of this LMI characterization is that it can be utilized without further knowledge about the cost function. This is due to the fact that in flexible-step MPC, the two tasks of optimization and stabilization are decoupled, i.e. the \hodclf~can generally be a different function than the optimal cost function. In standard MPC on the other hand, the tasks of optimization and stabilization go hand in hand. The connection is particularly apparent, when working with a linear system and a quadratic cost function. In this case, the terminal cost consists of the solution to a Riccati or Lyapunov equation. For non-quadratic cost functions, there is no straightforward procedure for the design in standard MPC, however, as mentioned earlier, the LMI approach can still be utilized for flexible-step MPC.

More importantly, the primary advantage of this LMI characterization is that it allows us to use the flexible-step MPC scheme in the context of switched linear systems. It is well-known that there are globally asymptotically stable switched systems for which no classical quadratic common Lyapunov function exists~\cite{DL:03, JH-DL-DA-ES:05}. 
In this paper, we provide an example of such a system, for which, in contrast, 
a quadratic common \hodclf~exists. In particular, we find this function by coupling the LMI conditions for the individual linear components.  Our result demonstrates that similar to nonholonomic systems~\cite{AF-CE-BG:22}, flexible-step MPC is also a useful tool for stabilizing switched linear systems.

The remainder of the paper is organized as follows. We give a brief summary of the flexible-step MPC scheme in Section~\ref{sec:flexMPC}. Section~\ref{sec:avenuelmi} contains our main result, where we explain the translation of the \adc~constraint into an LMI and discuss a numerical MPC example with a non-quadratic cost function. In Section~\ref{sec:commonLF}, we first apply the LMI approach and then flexible-step MPC to a linear switched control system and present the numerical results. We discuss our future directions in Section~\ref{sec:conclusion}.

\subsection{Notation}\label{sec:prelim}
We denote the set of non-negative (positive) integers by $\mathbb{N}$ ($\mathbb{N}_{>0}$), the set of (non-negative) reals by $\real$ ($\mathbb{R}_{\geq 0}$), and the interior of a subset $S \subseteq \mathbb{R}^n$ by $\interior S$. The set of real matrices of the size $p\times q$ is denoted by $\real^{p,q}$. We make use of boldface when considering a sequence of finite vectors, e.g. $\mathbf{u}= [u_{0},\dots,u_{N-1}] \in \real^{p,N}$, we refer to $j$th component by $u_j$ and to the subsequence going from component $i$ to $j$ by $\mathbf{u}_{[i:j]}$. Similarly, $\mathbf{U}_{[0:N-1]} \subseteq \mathbb{R}^{p,N}$ denotes a set of (ordered) sequences of vectors with components zero to $N-1$. When such a set depends on the initial state $x$, it is expressed by $\mathbf{U}_{[0:N-1]}(x)$. As usual, $\boldu^{*}$ denotes the solution to the optimal control problem solved in the iteration of an MPC scheme, the symbol $\boldu^{*-}$ denotes the optimal control sequence of the \emph{previous} iteration. For later use, we recall that a function $V: \mathbb{R}^n\times\mathbb{R}^{p, q} \to \mathbb{R}$ is called positive definite if $V(x,\boldu) = 0$ is equivalent to $ (x,\boldu) = (0,\mathbf{0})$ and $ V(x,\boldu) > 0$ for all $(x,\boldu) \in \mathbb{R}^n\times\mathbb{R}^{p, q}\setminus \{(0, \mathbf{0})\}$. Furthermore, a function $\alpha: \mathbb{R}^n\times\mathbb{R}^{p, q} \to \mathbb{R}$ is called radially unbounded if $\|(x,\boldu)\| \to \infty$ implies $\alpha(x,\boldu) \to \infty$, where $\|\cdot\|$ is the Euclidean norm. Note that $x$ is a vector and $\boldu$ is a matrix here, so with $\|(x,\boldu)\|$ we implicitly refer to the norm of $[x^T,\vect(\boldu)^T]$, where $\vect(\boldu)$ is the usual vectorization of a matrix into a column vector. 

In the setting of MPC, it is important to distinguish between predictions and the actual implementations at a given time index $k \in \mathbb{N}$. 
In particular, we use $x^k, u^k$ to refer to \emph{predictions}, whereas we use $x(k), u(k)$ to refer to the \emph{actual} states and \emph{implemented} inputs, respectively. 


\section{Flexible-step MPC}\label{sec:flexMPC}
To begin with, let us consider nonlinear discrete-time control systems of the form
\begin{align}\label{eqn:control_system}
    x^{k+1} &= f(x^k,u^k),
\end{align} 
where $ k \in \mathbb{N} $ denotes the time index, $x^k \in X \subseteq \mathbb{R}^n$ is the state with the initial state $x^0 \in X$ and $u^k \in U \subseteq \mathbb{R}^p$ is an input. The state and input constraints satisfy $0 \in \interior X$, $0 \in \interior U$ and $f: X \times U \to \mathbb{R}^n$ is continuous with $f(0,0)=0$. The core of the flexible-step MPC scheme consists of \hodclf s, which are defined as follows~\cite{AF-CE-BG:22}:
\begin{definition}[Set of Feasible Controls]
For $x\in X$, we define a set of feasible controls as 
\begin{align}
    \mathbf{U}_{[0:N-1]}(x):=\{\boldu_{[0:N-1]} \in \mathbb{R}^{p,N}: &\text{ with $x^0=x$}, u_j \in U, x^{j+1} = f(x^j,u_j) \in X,\label{eqn:setfeasc} \\
    &\hphantom{6cm} j = 0, \dots, N-1\}.\nonumber 
\end{align}
An infinite sequence of control inputs is called feasible when equation~\eqref{eqn:setfeasc} is fulfilled for all times $j \in \mathbb{N}$. 
\end{definition}
\begin{definition}[\hodclf]\label{def:ho-dclf}
Consider the control system~\eqref{eqn:control_system}. Let $m \in \mathbb{N}_{>0}$ and $q \in \mathbb{N}$. We call $V: \mathbb{R}^n \times \mathbb{R}^{p, q} \to \mathbb{R} $ a generalized discrete-time control Lyapunov function of order $m$ (\hodclf) for system~\eqref{eqn:control_system} if $V$ is continuous, positive definite and additionally:

\underline{When $q = 0$:}  
\begin{enumerate}[i)]
\setcounter{enumi}{0}
    \item there exists a continuous, radially unbounded and positive definite function ${\alpha: \mathbb{R}^n \to \mathbb{R}}$ such that
    for any $x^0 \in$ $\mathbb{R}^n $ we have
$
    V(x^0) - \alpha(x^0) \geq 0;
$
\item for any $x^0 \in \mathbb{R}^n$ there exists $\boldnu_{[0:m-1]} \in \mathbf{U}_{[0:m-1]}(x^0)$, which steers $x^0$ to some $x^{m}$, such that
\begin{equation}\label{eqn:adc_q0}
    \hspace{-0.1cm}\tfrac{1}{m}(\sigma_m V(x^{m}) + \dots +\sigma_1 V(x^1)) -V(x^0) \leq -\alpha(x^0),
\end{equation}
where $\sigma_m, \ldots, \sigma_1 \in \real_{\geq 0}$ and 
\begin{equation}\label{eqn:sigma_ave}
    \tfrac{1}{m}(\sigma_m + \sigma_{m-1} + \ldots + \sigma_1) -1 \geq 0.
\end{equation}
\end{enumerate} 

\underline{When $q \neq 0$:}
\begin{enumerate}[i')]
\setcounter{enumi}{0}
    \item there exists a continuous, radially unbounded and positive definite function ${\alpha: \mathbb{R}^n \times \mathbb{R}^{p, q}\to \mathbb{R}}$ such that for any $(x^0,\boldu_{[0:q-1]}) \in$ ${\mathbb{R}^n \times \mathbf{U}_{[0:q-1]}(x^0)}$ we have
\begin{align}\label{eqn:cond_V_alpha}
    V(x^0, \boldu_{[0:q-1]}) - \alpha(x^0,\boldu_{[0:q-1]}) \geq 0;
\end{align}
\item for any $(x^0,\boldu_{[0:q-1]}) \in \mathbb{R}^n \times \mathbf{U}_{[0:q-1]}(x^0)$ there exists $\boldsymbol{\nu}_{[0:q+m-1]}\in \mathbb{R}^{p,q+m}$
with $\boldnu_{[l:q+l-1]} \in \mathbf{U}_{[0:q-1]}(x^l)$ for every $l \in \{0,1,\dots,m\}$, which steers $x^0$ to some $x^{m}$, such that
\begin{align}\label{eqn:dclf_orderm}
    &\tfrac{1}{m}(\sigma_m V(x^{m},\boldsymbol{\nu}_{[m:q+m-1]}) + \dots +\sigma_1 V(x^1,\boldsymbol{\nu}_{[1:q]}))-V(x^0, \boldu_{[0:q-1]}) \leq -\alpha(x^0,\boldu_{[0:q-1]}),
\end{align}
where $\sigma_m, \ldots, \sigma_1 \in \real_{\geq 0}$ satisfy~\eqref{eqn:sigma_ave}.
\end{enumerate}
\end{definition}

The interpretation of Definition~\ref{def:ho-dclf} is that the sequence of Lyapunov function
values decreases on average every $m$ steps, and not at every step as in the classical Lyapunov function. This is why we will refer to~\eqref{eqn:dclf_orderm} (or~\eqref{eqn:adc_q0} in the case $q=0$) as the average decrease
 constraint (\adc).
This notion is key in the flexible-step MPC scheme proposed in~\cite{AF-CE-BG:22}, which 
aims to solve the following optimal control problem:
\begin{align}\label{eq:display}
    \begin{aligned}
    &\hspace{-0.4cm}\min \sum_{j=0}^{N_p -1} f_0(x^j,u^j) + \phi(x^{N_p})\\
    \ &\hspace{-0.3cm}\text{s.t. }  x^{j+1} = f(x^j,u^j), \ j = 0, \dots, N_p-1, \\ 
    &\hspace{-0.3cm}\hphantom{\text{s.t. }} x^0 = x, \\ 
     &\hspace{-0.3cm}\hphantom{\text{s.t. }} u^j \in U,\,x^j \in X, \, x^{N_p} \in X^{N_p}, \ j = 0, \dots, N_p-1  \\ 
    &\hspace{-0.3cm}\hphantom{\text{s.t. }} [u^l, \dots, u^{l+q-1}] \in \mathbf{U}_{[0:q-1]}(x^l) \text{ for } l=0,1,\dots,m\\
    &\hspace{-0.3cm}\hphantom{\text{s.t. }}\tfrac{1}{m}\sum_{j=1}^{m}\sigma_{j}V(x^{j}, [u^j, \dots,u^{j-1+q}]) - V(x^{0},\boldu^{*-}_{[0:q-1]}) \leq -\alpha(x^0,\boldu^{*-}_{[0:q-1]}),
    \end{aligned}
\end{align}
where $N_p \in \mathbb{N}, q \leq N_p$ with $q\in \mathbb{N}, m \leq N_p$ with $m \in \mathbb{N}_{>0}, X^{N_p} \subseteq X \subseteq \mathbb{R}^n$ with $0 \in \interior X^{N_p}$, $U \subseteq \mathbb{R}^p$ with $0 \in \interior U$, $\sigma_m, \ldots, \sigma_1 \in \real_{\geq 0} $ satisfying~\eqref{eqn:sigma_ave}, $\phi: \mathbb{R}^n \to \mathbb{R}$ is a positive semi-definite function, the function $f_0: \mathbb{R}^n\times\mathbb{R}^p\to \mathbb{R}$ is positive definite and $V: \mathbb{R}^n\times\mathbb{R}^{p, q}\to \mathbb{R}$ is a \hodclf~of order $m$.
Note that condition~\eqref{eqn:dclf_orderm} was added as a constraint to the above optimal control problem.  
The flexible-step MPC scheme is given in Algorithm~\ref{algo:ho-dclf}. We find it useful to explain the idea behind our implementation with an example, illustrated in Figure~\ref{fig:expl_new_mpcimpl}. 
 \begin{algorithm}[ht]
 \caption{Flexible-step MPC scheme}
 \label{algo:ho-dclf}
 \begin{algorithmic}[1]
 \STATE set $k=\initimetimev$, measure the initial state $x(k_0)$ and choose an arbitrary $\boldu^{*-}_{[0:q-1]} \in \mathbf{U}_{[0:q-1]}(x(k_0))$
 \STATE measure the current state $x(k)$ of~\eqref{eqn:control_system}
 \STATE solve problem~\eqref{eq:display} with $x=x(k)$ and obtain the optimal input ${\mathbf{u}^{*}_{[0:N-1]}}$, where $N=\max\{q+m,N_p\}$
 \STATE choose an index $1\leq\ld\leq m$ for which $V(x^{*\ld},\mathbf{u}^*_{[\ld:q+\ld-1]}) - V(x,\boldu^{*-}_{[0:q-1]}) \leq -
     \alpha(x,\boldu^{*-}_{[0:q-1]})$ 
 \STATE implement $\boldu^*_{[0:\ld-1]}=:[c^{k}_{mpc},\dots, c^{k+\ell_{decr}-1}_{mpc}]$ and redefine $\boldu^{*-}_{[0:q-1]}:=\boldu^{*}_{[\ld:q+\ld-1]}$ 
 \STATE increase $k:=k+\ld$ and go to 2
 \end{algorithmic}
 \end{algorithm}  

\begin{figure*}
\begin{subfigure}[b]{0.242\textwidth}
\begin{tikzpicture}[scale=0.485][x=0.5cm, y=0.5cm,domain=0:11,smooth]
   \draw [color=gray!30]  [step=5mm] (-0.3,-0.5) grid (8.5,12);
   \draw[->,thick] (0,0) -- (8,0) node[right] {$k$};
   \draw[->,thick] (0,0) -- (0,11) node[above] {$V$};
   \foreach \c in {1,2,...,7}{
     \draw (\c,-.1) -- (\c,.1) ;
   }
    \filldraw[LightGreen] (0,9) circle (8pt);
    \filldraw[DarkBlue] (0,9) circle (3pt) node[anchor=west] {$V(x^0)$}; 
    \draw[DarkBlue] (0,9) -- (1,8);
    \filldraw[DarkBlue] (1,8) circle (3pt) node[anchor=west] {$V(x^1)$};
    \draw[DarkBlue] (1,8) -- (2,6);
    \filldraw[DarkBlue] (2,6) circle (3pt) node[anchor=west] {$V(x^2)$};
    \draw[DarkBlue,dashed] (2,6) -- (3,7.5);
    \filldraw[DarkBlue] (3,7.5) circle (3pt) node[anchor=west] {$V(x^3)$};
    \draw[DarkBlue,dashed] (3,7.5) -- (4,10);
    \filldraw[DarkBlue] (4,10) circle (3pt) node[anchor=west] {$V(x^4)$};
\end{tikzpicture}
\caption{Optimization at ${k=0}$ \hphantom{platzhalter}}
\label{fig:expl_new_mpcimpl1}
\end{subfigure}
\begin{subfigure}[b]{0.242\textwidth}
\begin{tikzpicture}[scale=0.485][x=0.5cm, y=0.5cm,domain=0:11,smooth]
   \draw [color=gray!30]  [step=5mm] (-0.3,-0.5) grid (8.5,12);
   \draw[->,thick] (0,0) -- (8,0) node[right] {$k$};
   \draw[->,thick] (0,0) -- (0,11) node[above] {$V$};
   \foreach \c in {1,2,...,7}{
     \draw (\c,-.1) -- (\c,.1) ;
   }
    \filldraw[gray!40] (0,9) circle (8pt);
    \filldraw[black!70] (0,9) circle (3pt);
    \draw[black!50] (0,9) -- (1,8);
    \filldraw[black!70] (1,8) circle (3pt);
    \draw[black!50] (1,8) -- (2,6);
    \filldraw[LightGreen] (2,6) circle (8pt);
    \filldraw[LightBlue] (2,6) circle (3pt) node[anchor=south] {$V(x^0)$};
    \draw[black!50,dashed] (2,6) -- (3,7.5);
    \filldraw[black!70] (3,7.5) circle (3pt);
    \draw[black!50,dashed] (3,7.5) -- (4,10);
    \filldraw[black!70] (4,10) circle (3pt);
    \filldraw[LightBlue] (3,3) circle (3pt) node[anchor=west] {$V(x^1)$};
    \draw[LightBlue] (2,6) -- (3,3);
    \filldraw[LightBlue] (4,6.4) circle (3pt) node[anchor=west] {$V(x^2)$};
    \draw[LightBlue,dashed] (3,3) -- (4,6.4);
    \filldraw[LightBlue] (5,8.5) circle (3pt) node[anchor=west] {$V(x^3)$};
    \draw[LightBlue,dashed] (4,6.4) -- (5,8.5);
    \filldraw[LightBlue] (6,10.5) circle (3pt) node[anchor=west] {$V(x^4)$};
    \draw[LightBlue,dashed] (5,8.5) -- (6,10.5);
\end{tikzpicture}
\caption{Optimization at ${k=2}$ \hphantom{platzhalter}}
\label{fig:expl_new_mpcimpl2}
\end{subfigure}
\begin{subfigure}[b]{0.242\textwidth}
\begin{tikzpicture}[scale=0.485][x=0.5cm, y=0.5cm,domain=0:11,smooth]
   \draw [color=gray!30]  [step=5mm] (-0.3,-0.5) grid (8.5,12);
   \draw[->,thick] (0,0) -- (8,0) node[right] {$k$};
   \draw[->,thick] (0,0) -- (0,11) node[above] {$V$};
   \foreach \c in {1,2,...,7}{
     \draw (\c,-.1) -- (\c,.1) ;
   }
    \filldraw[gray!40] (0,9) circle (8pt);
    \filldraw[black!70] (0,9) circle (3pt);
    \draw[black!50] (0,9) -- (1,8);
    \filldraw[black!70] (1,8) circle (3pt);
    \draw[black!50] (1,8) -- (2,6);
    \filldraw[gray!40] (2,6) circle (8pt);
    \filldraw[black!70] (2,6) circle (3pt);
    \draw[black!50, dashed] (2,6) -- (3,7.5);
    \filldraw[black!70] (3,7.5) circle (3pt);
    \draw[black!50, dashed] (3,7.5) -- (4,10);
    \filldraw[black!70] (4,10) circle (3pt);
    \filldraw[LightGreen] (3,3) circle (8pt);
    \filldraw[DarkGreen] (3,3) circle (3pt) node[anchor=west] {$V(x^0)$};
    \draw[black!50] (2,6) -- (3,3);
    \filldraw[black!70] (4,6.4) circle (3pt);
    \draw[black!50,dashed] (3,3) -- (4,6.4);
    \filldraw[black!70] (5,8.5) circle (3pt);
    \draw[black!50,dashed] (4,6.4) -- (5,8.5);
    \filldraw[black!70] (6,10.5) circle (3pt);
    \draw[black!50,dashed] (5,8.5) -- (6,10.5);
    \draw[DarkGreen] (3,3) -- (4,4);
    \filldraw[DarkGreen] (4,4) circle (3pt) node[anchor=west] {$V(x^1)$};
    \filldraw[DarkGreen] (5,5) circle (3pt) node[anchor=east] {$V(x^2)$};
    \draw[DarkGreen] (4,4) -- (5,5);
    \draw[DarkGreen] (5,5) -- (6,6);
    \filldraw[DarkGreen] (6,6) circle (3pt) node[anchor=west] {$V(x^3)$};
    \draw[DarkGreen] (6,6) -- (7,1);
    \filldraw[DarkGreen] (7,1) circle (3pt) node[anchor=east] {$V(x^4)$};
\end{tikzpicture}
\caption{Optimization at ${k=3}$ \hphantom{platzhalter}}
\label{fig:expl_new_mpcimpl3}
\end{subfigure}
\begin{subfigure}[b]{0.242\textwidth}
\begin{tikzpicture}[scale=0.485][x=0.5cm, y=0.5cm,domain=0:11,smooth]
   \draw [color=gray!30]  [step=5mm] (-0.3,-0.5) grid (8.5,12);
   \draw[->,thick] (0,0) -- (8,0) node[right] {$k$};
   \draw[->,thick] (0,0) -- (0,11) node[above] {$V$};
   \foreach \c in {1,2,...,7}{
     \draw (\c,-.1) -- (\c,.1) ;
   }
    \draw[yellow!70,line width=1.1mm] (0,9) -- (1,8);
    \draw[yellow!70,line width=1.1mm] (1,8) -- (2,6);
    \draw[yellow!70,line width=1.1mm] (2,6) -- (3,3);
    \draw[yellow!70,line width=1.1mm] (3,3) -- (4,4);
    \draw[yellow!70,line width=1.1mm] (4,4) -- (5,5);
    \draw[yellow!70,line width=1.1mm] (5,5) -- (6,6);
    \draw[yellow!70,line width=1.1mm] (6,6) -- (7,1);
    \filldraw[LightGreen] (0,9) circle (8pt);
    \filldraw[LightGreen] (2,6) circle (8pt);
    \filldraw[LightGreen] (3,3) circle (8pt);
    \filldraw[LightGreen] (7,1) circle (8pt);
    \filldraw[black] (0,9) circle (3pt) node[anchor=west] {$V(x(0))$};
    \draw[black] (0,9) -- (1,8);
    \filldraw[black] (1,8) circle (3pt) node[anchor=west] {$V(x(1))$};
    \draw[black] (1,8) -- (2,6);
    \filldraw[black] (2,6) circle (3pt) node[anchor=west] {$V(x(2))$};
    \filldraw[black] (3,3) circle (3pt) node[anchor=west] {$V(x(3))$};
    \draw[black] (2,6) -- (3,3);
    \draw[black] (3,3) -- (4,4);
    \filldraw[black] (4,4) circle (3pt) node[anchor=west] {$V(x(4))$};
    \filldraw[black] (5,5) circle (3pt) node[anchor=east] {$V(x(5))$};
    \draw[black] (4,4) -- (5,5);
    \draw[black] (5,5) -- (6,6);
    \filldraw[black] (6,6) circle (3pt) node[anchor=west] {$V(x(6))$};
    \draw[black] (6,6) -- (7,1);
    \filldraw[black] (7,1) circle (3pt) node[anchor=east] {$V(x(7))$};
\end{tikzpicture}
\caption{Lyapunov function values along actual states}
\label{fig:final_trajectory_expl_mpc_newimpl}
\end{subfigure}
\caption{Illustration of proposed MPC scheme: Consider Problem~\eqref{eq:display} with $m=4$ and $q=0$. The initial state is ${\color{DarkBlue}x^0}=x(0)$, whose Lyapunov function value is depicted in (a) and highlighted in green. After solving the finite-horizon optimal control problem, we obtain four predicted states and their corresponding Lyapunov function values ({\color{DarkBlue}$V(x^0),V(x^1),V(x^2),V(x^3),V(x^4)$}). Since there are multiple time indices for which the Lyapunov function decreases, we choose $\ld$ here as the index where the greatest descent occurs. We implement $\ld=2$ components of the control sequence and, consequently, declare {\color{DarkBlue}$x^2$} as the new initial state for the finite-horizon optimal control problem at time $k=2$. 
This problem is solved in (b) and we obtain again four states and their corresponding Lyapunov function values ({\color{LightBlue}$V(x^0),V(x^1),V(x^2),V(x^3),V(x^4)$}). We repeat the scheme and after solving Problem~\eqref{eq:display} three times, we obtain the trajectory of the closed-loop states and their Lyapunov function values shown in (d).}
\label{fig:expl_new_mpcimpl}
\end{figure*}
 
For further details, we refer the reader to our prior work in~\cite{AF-CE-BG:22,AF-CE-BG:23}. This paper is concerned with finding a systematic way to design the \hodclf~$V$, so that it can be utilized for \adc~inside of~\eqref{eq:display}. Similar to Lyapunov theory, in general settings, this is quite a challenging task; for this reason, we restrict ourselves to linear systems in what follows. 


\section{\hodclf s for linear systems}\label{sec:avenuelmi}
\noindent From now on, we consider linear discrete-time control systems
\begin{align}\label{eqn:linsys}
    x^{k+1} = Ax^k + Bu^k
\end{align}
with some initial state $x^0 \in \real^n$, where $x^k \in \real^n, \, u^k \in \real^p, \, A \in \real^{n,n}$ and $B \in \real^{n,p}$. Since we are now in the linear setting, let us focus on the quadratic \hodclf
\begin{equation*}
    V(x) = \|x\|^2.
\end{equation*}
Note that we choose the simplest quadratic function here, other quadratic functions would be possible as well.
In this case, \adc~reads:
\begin{align}\label{eq:adc}
&\frac{\sigma_m\|x^m\|^2 + \dots + \sigma_1\|x^1\|^2 }{m} - \|x^0\|^2 \leq -\alpha(x^0).
\end{align}
Suppose we utilize state feedback $u = Kx$ with some matrix $K \in \real^{p,n}$. Clearly, we have that 
\[
x^j = (A + BK)^j x^0,
\]
for all $ j \in \{1,\ldots m\}$. Therefore,
\[
\mathbf{x}= \Phi x^0,
\]
where $ \mathbf{x} =
((x^1)^T, \dots, (x^m)^T)^T
 $ and  
\begin{align*}
    \Phi &= \begin{bmatrix}
        A + BK\\
        (A + BK)^2\\
        \vdots\\
        (A + BK)^m
    \end{bmatrix}
\in \real^{nm,n}.
\end{align*}
By defining 
\begin{align*}
    \Lambda := \begin{bmatrix}
        \dfrac{\sigma_1}{m}I & & \\
         & \ddots & \\
         & & \dfrac{\sigma_m}{m}I
    \end{bmatrix} \in \real^{nm,nm},
\end{align*}

and also $\alpha(x^0) := \varepsilon \|x^0\|^2$, where $1>\varepsilon>0$, we can rewrite~\eqref{eq:adc} as a matrix inequality:
\begin{align*}
    &\hspace*{-3.6cm}\mathbf{x}^T\Lambda \mathbf{x} - (x^0)^Tx^0\nonumber\\
    =\, (\Phi x^0)^T \Lambda \Phi x^0 - (x^0)^Tx^0&\leq -\alpha(x^0)= -\varepsilon\|x^0\|^2. \nonumber
\intertext{By rearranging further, we obtain}
    -(x^0)^T \Phi^T\Lambda \Phi x^0 + (x^0)^TIx^0&\geq \varepsilon(x^0)^Tx^0 \quad \forall x^0 \in \real^n \nonumber \\
   \Leftrightarrow -\Phi^T\Lambda \Phi + I & \succeq \varepsilon I,
\end{align*}
which is a Linear Matrix Inequality (LMI) with the decision variable $\Lambda$. If we pose no constraint on this LMI, the solution will just be the zero matrix, as $I \succeq \varepsilon I$ for small $\varepsilon$.
We do, however, have two constraints on the weights: First, they need to be non-negative; second, they need to satisfy~\eqref{eqn:sigma_ave}. Altogether, our LMI consists of three parts:
\begin{align}
    -\Phi^T\Lambda \Phi + I -\varepsilon I &\succeq 0\label{eqn:fullLMI1}\\
    \Lambda &\succeq 0\label{eqn:fullLMI2}\\
    \tfrac{1}{m}(\sigma_1 + \dots + \sigma_m) &\geq 1 \text{ or } \trace(\Lambda)\geq n. \label{eqn:fullLMI3}
\end{align}
This leads us to the first  result of this paper.
\begin{theorem}\label{thm:mainthm}
Consider system~\eqref{eqn:linsys} and a stabilizing matrix $K \in \mathbb{R}^{p,n}$, i.e. the absolute value of each eigenvalue of $A+BK$ is strictly less than one. Then there exists a sufficiently large $m \in \mathbb{N}_{>0}$ such that $\sigma_1, \dots, \sigma_m \in \mathbb{R}_{\geq 0}$ satisfy the LMI described by~\eqref{eqn:fullLMI1},~\eqref{eqn:fullLMI2},~\eqref{eqn:fullLMI3} with $0 < \varepsilon < 1$.    
\end{theorem}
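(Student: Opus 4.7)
The plan is to exhibit an explicit feasible choice of weights that works once $m$ is large enough, rather than proving existence by an abstract compactness argument. Let $M := A+BK$; since $K$ is stabilizing, the spectral radius satisfies $\rho(M) < 1$, so by Gelfand's formula there exist constants $C > 0$ and $\lambda \in (\rho(M),1)$ with $\|M^j\| \leq C \lambda^j$ for all $j \geq 0$. This exponential decay of the powers of $M$ is the only structural fact I intend to use.

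Next, I would pick the simplest candidate: the uniform weights $\sigma_1 = \sigma_2 = \cdots = \sigma_m = 1$. These immediately satisfy the nonnegativity condition \eqref{eqn:fullLMI2} and satisfy \eqref{eqn:fullLMI3} with equality, since $\tfrac{1}{m}(\sigma_1+\cdots+\sigma_m)=1$. The only nontrivial condition to verify is \eqref{eqn:fullLMI1}. Using the block structure of $\Phi$ and $\Lambda$, one has
\begin{equation*}
\Phi^T \Lambda \Phi \;=\; \frac{1}{m}\sum_{j=1}^m (M^j)^T M^j,
\end{equation*}
so \eqref{eqn:fullLMI1} is equivalent to the positive semidefinite bound
\begin{equation*}
\frac{1}{m}\sum_{j=1}^m (M^j)^T M^j \;\preceq\; (1-\varepsilon)\, I.
\end{equation*}

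To establish this, I would bound the operator norm of the left-hand side using the fact that it is a sum of positive semidefinite matrices, so its spectral norm is at most the sum of their spectral norms:
\begin{equation*}
\left\|\frac{1}{m}\sum_{j=1}^m (M^j)^T M^j\right\| \;\leq\; \frac{1}{m}\sum_{j=1}^m \|M^j\|^2 \;\leq\; \frac{C^2}{m} \sum_{j=1}^m \lambda^{2j} \;\leq\; \frac{C^2 \lambda^2}{m(1-\lambda^2)}.
\end{equation*}
The right-hand side tends to zero as $m \to \infty$, so for any fixed $\varepsilon \in (0,1)$ one can choose $m$ large enough that this norm is at most $1-\varepsilon$, which gives \eqref{eqn:fullLMI1}.

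I do not expect a real obstacle here; the argument is essentially a Cesàro-averaging of the exponentially decaying sequence $\|M^j\|^2$. The only conceptual point worth flagging is why the trivial uniform weighting is natural: condition \eqref{eqn:fullLMI3} forces the average of the $\sigma_j$ to be at least $1$, but the freedom in $m$ allows us to spread the contribution of $\Phi^T\Lambda\Phi$ across many terms whose norms are already small once the time index is comparable to the settling time of $M$. One could optimize by putting more weight on later indices where $\|M^j\|^2$ is smaller, but the uniform choice already suffices to prove the existence statement.
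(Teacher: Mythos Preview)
Your proof is correct, but it takes a different route from the paper. The paper concentrates all the weight on the final term by setting $\sigma_1=\cdots=\sigma_{m-1}=0$ and $\sigma_m=m$, so that \eqref{eqn:fullLMI1} reduces to the single condition $((A+BK)^m)^T(A+BK)^m \preceq (1-\varepsilon)I$, which follows directly from $\|(A+BK)^m\|\to 0$. You instead spread the weight uniformly, $\sigma_1=\cdots=\sigma_m=1$, and control the Ces\`aro average $\tfrac{1}{m}\sum_{j=1}^m (M^j)^T M^j$ via the summability of $\|M^j\|^2$. Both arguments are elementary and rely only on $\rho(A+BK)<1$. The paper's concentrated choice yields a sharper (exponentially decaying) bound on the left-hand side of \eqref{eqn:fullLMI1}, whereas your uniform choice gives only an $O(1/m)$ bound but has the merit of showing that even the most naive weighting is feasible---the remark you make at the end, about shifting weight to later indices, is in fact exactly what the paper does in the extreme.
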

\begin{proof}
Suppose  $K \in \mathbb{R}^{p,n}$ satisfies $\rho(A+BK)<1$. Our goal is to find a solution to the LMI~\eqref{eqn:fullLMI1},~\eqref{eqn:fullLMI2},~\eqref{eqn:fullLMI3}. Let us first consider the setting where $\sigma_1, \dots, \sigma_{m-1}=0$ and $\sigma_m = m$. With this choice,~\eqref{eqn:fullLMI2} and~\eqref{eqn:fullLMI3} are automatically satisfied. We show that we can find $m \in \mathbb{N}_{>0}$ such that the average descent property described by~\eqref{eqn:fullLMI1} is satisfied. With the above choices,~\eqref{eqn:fullLMI1} reads
\begin{align}\label{eqn:fullLMI1_subbedin}
    -((A+BK)^m)^T(A+BK)^m + (1-\varepsilon)I \succeq 0.
\end{align}
Note that
\begin{align*}
    \, \|((A+BK)^m)^T&(A+BK)^m\| \\
    \leq & \, \|((A+BK)^m)^T\| \|(A+BK)^m\| \\ 
    = \, & \|(A+BK)^m\| \|(A+BK)^m\|.
\end{align*}
Since $\rho(A+BK)<1$, it is a well-known fact~\cite{EI-HK:94} that $\lim_{m \to \infty} \|(A+BK)^m\|=0$; therefore, 
\begin{align*}
\lim_{m \to \infty} ((A+BK)^m)^T(A+BK)^m &= 0.
\end{align*} 
Let us denote the eigenvalues of ${((A+BK)^m)^T(A+BK)^m}$ by $\{\lambda_{m,1}, \dots, \lambda_{m,n}\}$.
This in turn implies that each of the eigenvalues $\lambda_{m,i}$ converges to the eigenvalues of the zero matrix, which are all zero of course, as $m$ goes to infinity. Let us now choose $\tilde \varepsilon$ with $0 < \tilde \varepsilon \leq 1 - \varepsilon$. Then there exists $\tilde m \in \mathbb{N}_{>0}$ such that for all eigenvalues $\lambda_{\tilde m,i}$ we have $|\lambda_{\tilde m,i}| < \tilde \varepsilon$. Since the eigenvalues of  
\begin{equation}\label{eqn:A+BK1-Eps}
    -((A+BK)^{\tilde m})^T(A+BK)^{\tilde m} + (1-\varepsilon)I
\end{equation}
are equal to $(1-\varepsilon) - \lambda_{\tilde m,i}$, we have that 
\begin{align*}
   (1-\varepsilon) -\lambda_{\tilde m,i} & \geq (1-\varepsilon) - |\lambda_{\tilde m,i}| \\
   &> (1-\varepsilon) - \tilde \varepsilon \geq 0.
\end{align*}
With all the eigenvalues of the matrix in~\eqref{eqn:A+BK1-Eps} being non-negative, we conclude that the matrix itself is positive (semi) definite. In other words, we have found $\tilde m \in \mathbb{N}_{>0}$ such that~\eqref{eqn:fullLMI1_subbedin} is satisfied.
In the more general case, where all the weights $\sigma_i$ are positive, we have $\tilde m-1$ more matrices to consider in~\eqref{eqn:fullLMI1}. Clearly, we can find small enough weights $\sigma_1, \dots, \sigma_{\tilde m-1}$ such that $-\Phi^T\Lambda \Phi$ is still dominated by $\frac{\sigma_{\tilde m}}{\tilde m}(-((A+BK)^{\tilde m})^T(A+BK)^{\tilde m})$ and hence, stays positive definite.  
This finishes the proof. 
\end{proof}

\subsection{Numerical Example}\label{sec:simulations}
To illustrate our results, we will use the introduced LMI approach for an example.  In particular, we consider the following problem: 
\begin{problem}\label{problem:simulation1}
\begin{align*}
\min &\sum_{j = 0}^{N_p-1}\|x^j\|_1 + 5\|u^j\|^2\\
\mathrm{s.t.} &\,x^{j+1} = \begin{bmatrix}
        2.13 & 1 & 1\\
        0 & 1 & 0.3 \\
        0 & 0 & 0.5
    \end{bmatrix}x^j + \begin{bmatrix}
        0 \\0 \\1
    \end{bmatrix}u^j\\
    &\,x^0=x\\
&\,\frac{1}{N_p}(\sigma_{N_p}\|x^{N_p}\|^2+\dots + \sigma_1\|x^1\|^2) - \|x^0\|^2 \leq -\varepsilon \|x^0\|^2. 
\end{align*}
\end{problem}
The initial state is $[4 \,\, 12 \,\, 15]^T$ and the prediction horizon is chosen to be $N_p=10$. Note that this problem has a non-quadratic running cost, which means that it is not directly clear which terminal cost should be utilized in the standard MPC.
To apply our methodology, we first select a stabilizing feedback $K$ for the control system in Problem~\ref{problem:simulation1}
\begin{equation*}
    K = \begin{bmatrix}-3.5507 &-2.6749 &   -2.4633\end{bmatrix}.
\end{equation*}
Next, we find a sufficiently large $m \in \mathbb{N}_{>0}$ such that the LMI~\eqref{eqn:fullLMI1},~\eqref{eqn:fullLMI2},~\eqref{eqn:fullLMI3} admits a solution; note that the existence of such an $ m $ is guaranteed by Theorem~\ref{thm:mainthm}.
Using numerical evaluations\footnote{The numerical evaluations consist of solving the LMI~\eqref{eqn:fullLMI1},~\eqref{eqn:fullLMI2},~\eqref{eqn:fullLMI3} through MATLAB and the {\tt sedumi} package.}, one can observe that selecting $m \geq 6$ suffices. We have chosen $m=10$ and have solved the LMI~\eqref{eqn:fullLMI1},~\eqref{eqn:fullLMI2},~\eqref{eqn:fullLMI3} to obtain:
\begin{align*}
&\sigma_1 = 0.0055, \sigma_2 = 0.0524, \sigma_3 = 0.0660, \sigma_4 = 0.0655,\\
&\sigma_5 = 0.0762, \sigma_6 = 0.0952, \sigma_7 = 0.1201, \sigma_8 =  0.1479, \\
&\sigma_9 = 0.1745, \sigma_{10} = 0.1967.
\end{align*}
With the weights for \adc~at hand, we solve Problem~\ref{problem:simulation1} with the flexible-step MPC scheme given in Algorithm~\ref{algo:ho-dclf} by using {\tt fmincon} from MATLAB.
In step four of Algorithm~\ref{algo:ho-dclf}, we find the index $\ell_{decr}\in \{1,\dots,N_p\}$ with the \emph{maximal} descent for the \hodclf~$V(x) = \|x\|^2$. We then choose $\varepsilon = 10^{-10}$ for the right-hand side of \adc. Figure~\ref{fig:state-trajectories-ex1} shows the state trajectories according to the solution of Problem~\ref{problem:simulation1}. After a transient phase, the state is successfully stabilized to the origin. A summary of the implemented steps at each optimization instance of Problem~\ref{problem:simulation1} is given in Figure~\ref{fig:impl-steps-ex1}, where notably, a flexible number of steps is implemented in each optimization instance. 

\begin{figure}
    \centering
    \includegraphics[width=0.7\linewidth]{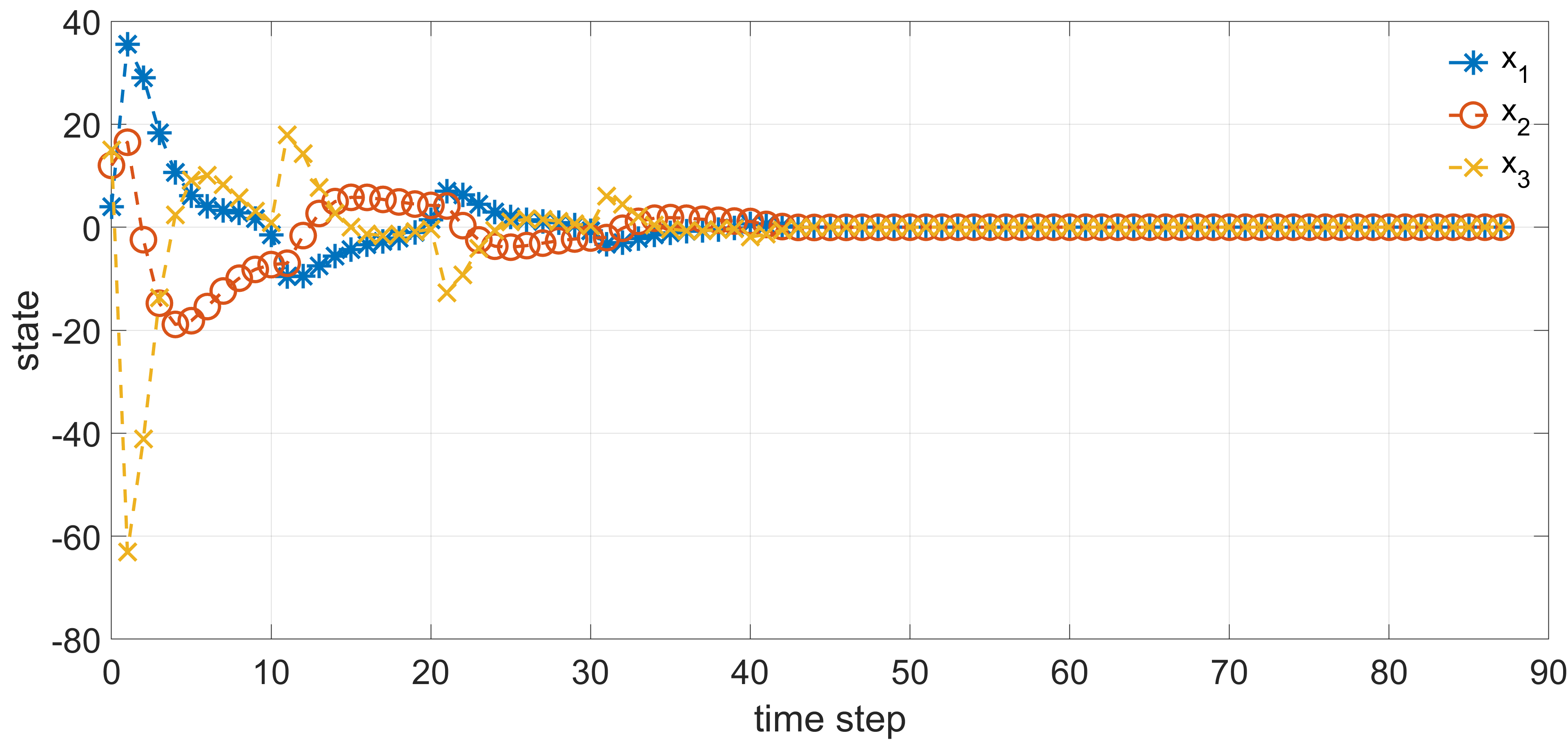}
    \caption{State trajectories according to the solution of Problem~\ref{problem:simulation1}}
    \label{fig:state-trajectories-ex1}
\end{figure}

\begin{figure}
    \centering
    \includegraphics[width=0.3\linewidth]{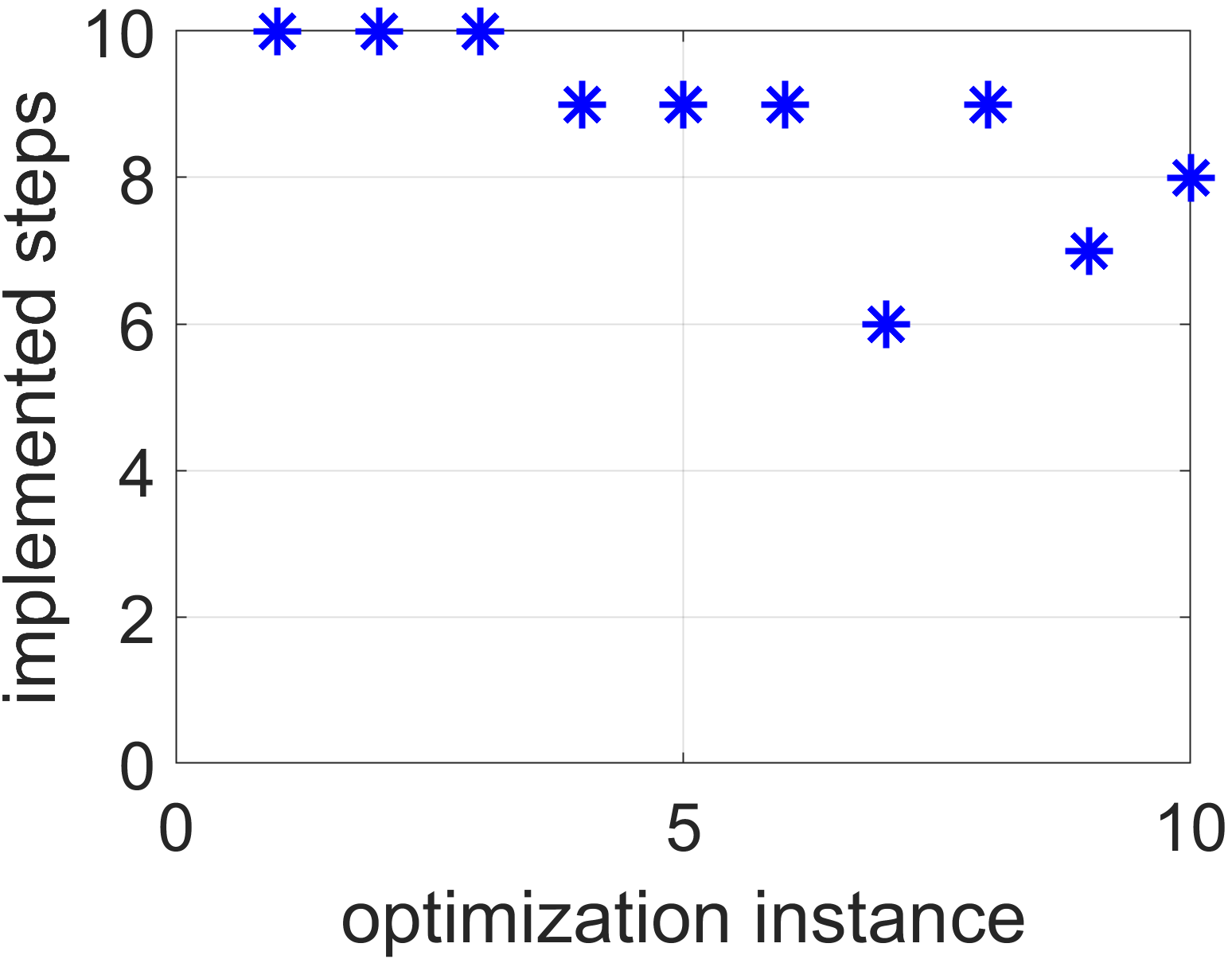}
    \caption{In each optimization instance, Problem~\ref{problem:simulation1} is solved. The solution yields the number of implemented steps, depicted on the vertical axis. Note that the sum of implemented steps of the optimization instances corresponds to the time step of the implementation. }
    \label{fig:impl-steps-ex1}
\end{figure}


\section{Quadratic Common \hodclf s}\label{sec:commonLF}
We now demonstrate how the  
introduced LMI approach
can be beneficial in the context of stabilization of switched linear systems. Let us start by recalling some terminology~\cite{DL:03}. 
We consider a family of nonlinear control systems described by
\begin{align}\label{eqn:general_nonlin_family_sys}
    x^{k+1} = f_{\uptheta}(x^k, u^k), \quad \uptheta \in \Uptheta,
\end{align}
where $\Uptheta$ is some index set and $f_{\uptheta}: \mathbb{R}^n \times \mathbb{R}^p \to \mathbb{R}^n$ is continuous for each $\uptheta \in \Uptheta$. 
With abuse of notation, we denote the switching signal by $\uptheta : \mathbb{N} \to \Uptheta$, which specifies, at each time instant $k \in \mathbb{N}$, the index $\uptheta(k) \in \Uptheta$ of the active subsystem, i.e. the system from the family~\eqref{eqn:general_nonlin_family_sys} that is currently being followed. 
Hence, a switched system with time-dependent switching can be described by the equation
\begin{equation}\label{eqn:nonlin_switched_system}
x^{k+1} = f_{\uptheta(k)}(x^k,u^k).
\end{equation}
In what follows, we discuss the asymptotic stability of the switched system~\eqref{eqn:nonlin_switched_system} under arbitrary switching. Let us precisely define what that entails. 
\begin{definition}
A switched system~\eqref{eqn:nonlin_switched_system} is uniformly asymptotically stable if for all $\varepsilon>0$ there exists a $\delta>0$ such that for all switching signals $\uptheta \in \Uptheta$ the trajectories of~\eqref{eqn:nonlin_switched_system} with $0 < \|x^0\| < \delta$ satisfy $\|x^k\| < \varepsilon$.
\end{definition}
We assume that the individual subsystems have the origin as the common equilibrium point: $f_{\uptheta}(0,0) = 0$ for all $\uptheta \in \Uptheta$. 
A useful tool when investigating the stability of a \emph{family} of control systems is the notion of \emph{common} Lyapunov functions, which we define next.
\begin{definition}
A Lyapunov function $V: \mathbb{R}^n \to \mathbb{R}_{\geq 0}$ is called common with respect to~\eqref{eqn:general_nonlin_family_sys} if it is a continuous, radially unbounded, positive definite function and satisfies the Lyapunov condition, i.e. there exists a continuous, positive definite function $W : \mathbb{R}^n \to \mathbb{R}_{\geq 0}$ such that for all $x\in \mathbb{R}^n$ and all $\uptheta \in \Uptheta$ there exists $u \in \mathbb{R}^p$ with
\begin{equation*}
    V(f_{\uptheta}(x,u)) - V(x) \leq -W(x).
\end{equation*}
\end{definition}
In other words, the single Lyapunov function $V$ decreases along trajectories of all subsystems of~\eqref{eqn:general_nonlin_family_sys}.
For simplicity, we assume in what follows that the index set $\Uptheta$ is finite. Then the existence of a common Lyapunov function abiding the slightly weaker condition  
\begin{equation}\label{eqn:Lyap_conditionw}
    V(f_{\uptheta}(x,u)) - V(x) < 0
\end{equation}
for all $x \in \mathbb{R}^n\setminus\{0\}$ and all $\uptheta \in \Uptheta$
implies global uniform (w.r.t. $\uptheta$) asymptotic stability, compare with the continuous time result~\cite[Theorem~2.1]{DL:03}.\footnote{
More generally, condition~\eqref{eqn:Lyap_conditionw} is sufficient for global uniform asymptotic stability under a compactness assumption on the index set $\Uptheta$ and a continuity assumption on $f_{\uptheta}$ with respect to $\uptheta$. Both of these assumptions trivially hold when $\Uptheta$ is finite~\cite{DL:03}.} 

Similar to the common Lyapunov function, a common \hodclf~for~\eqref{eqn:general_nonlin_family_sys} can be defined and utilized to show global uniform (w.r.t. $\uptheta$) asymptotic stability. Essentially, it must hold that for all $\uptheta \in \Uptheta$ there exists a feasible control such that \adc~is satisfied.

In the context of linear systems, a natural question is whether it is sufficient to work with \emph{quadratic} common Lyapunov functions, i.e. Lyapunov functions $V: \mathbb{R}^n \to \mathbb{R}_{\geq 0}$ in the form of $V(x) = x^TPx$, where $P \in \mathbb{R}^{n,n}$ is positive definite.
The next main result addresses precisely this question, with the first part being known for switched dynamical systems or continuous-time control systems~\cite{DL:03,JH-DL-DA-ES:05}. 

\begin{theorem}\label{prop:2lemmata}
The switched system
\begin{equation}\label{eqn:switchedlinsystem}
x^{k+1} = x^k + \uptheta\,h
\begin{bmatrix}
    0 & 1\\
    -(1+\alpha) & 0 \end{bmatrix} x^k
 + h \begin{bmatrix} 0 \\ 1\end{bmatrix} u^k, 
\end{equation}
where $\uptheta \in \{-1,1\}, h>0$ and $\alpha=0.5$,
does not admit a quadratic common control Lyapunov function. It does, however, admit a quadratic common \hodclf~for $h=0.1$. 
\end{theorem}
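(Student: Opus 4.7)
For the first claim, I would exploit that $B=h[0,1]^T$ has rank one, so $u^k$ can set the second coordinate of $A_\uptheta x^k+Bu^k$ arbitrarily, while the first coordinate $x_1+\uptheta h\,x_2$ is out of reach of $u^k$. For a candidate $V(x)=x^TPx$ with $P\succ 0$, the minimisation over $u$ is closed-form and yields
\[
\min_u V(A_\uptheta x+Bu) \;=\; \frac{\det P}{p_{22}}\,(x_1+\uptheta h\,x_2)^2,
\]
so the common-CLF inequality reduces, after subtracting $V(x)$ and using $\det P=p_{11}p_{22}-p_{12}^2$, to the requirement that the symmetric $2\times 2$ form in $(x_1,x_2)$ with $(1,1)$-entry $p_{12}^2/p_{22}$, off-diagonal $p_{12}-\uptheta h\,\det P/p_{22}$, and $(2,2)$-entry $p_{22}-h^2\det P/p_{22}$ be positive definite for both $\uptheta\in\{-1,+1\}$. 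The leading-minor condition forces $p_{12}\neq 0$, and a short simplification collapses the determinant condition to $2\uptheta p_{12}>hp_{11}$. Adding the two inequalities obtained for $\uptheta=+1$ and $\uptheta=-1$ yields $p_{11}<0$, contradicting $P\succ 0$. The corner case $p_{12}=0$ is handled directly, since the minimised value then equals $p_{11}x_1^2$ on the axis $x_2=0$, so strict decrease fails. Note that $\alpha$ plays no role here, since it only enters the $u$-controllable second row of $A_\uptheta$.

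For the second claim, I would apply the LMI framework of Section~\ref{sec:avenuelmi} simultaneously to both modes, with the candidate $V(x)=\|x\|^2$. First, I would exhibit a common stabilising feedback $K=[0,k_2]\in\mathbb{R}^{1,2}$: setting $k_1=0$ cancels the cross-term $h^2k_1$ in the characteristic polynomials of $A_1+BK$ and $A_{-1}+BK$, so both closed loops share the same spectrum. For $h=0.1$ and, e.g., $k_2=-1$, this common characteristic polynomial is $\lambda^2-1.9\lambda+0.915$, with eigenvalues of modulus $\sqrt{0.915}<1$. With such $K$ fixed, I would then search for a \emph{common} $\Lambda$ (i.e.\ weights $\sigma_1,\dots,\sigma_m$) solving the joint system
\[
-\Phi_\uptheta^T\Lambda\Phi_\uptheta+(1-\varepsilon)I\succeq 0 \text{ for } \uptheta\in\{-1,+1\},\qquad \Lambda\succeq 0,\qquad \trace(\Lambda)\ge n,
\]
where $\Phi_\uptheta$ is built from the powers of $A_\uptheta+BK$. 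Feasibility for sufficiently large $m$ follows by the argument of Theorem~\ref{thm:mainthm}: choosing $\sigma_m=m$ and $\sigma_i=0$ for $i<m$ reduces the first block to $(1-\varepsilon)I\succeq((A_\uptheta+BK)^m)^T(A_\uptheta+BK)^m$ for each $\uptheta$, which holds once $m$ exceeds the maximum over $\uptheta\in\{\pm 1\}$ of the threshold supplied by the Schur property of the two closed loops; the remaining constraints on $\Lambda$ are then automatic. This certifies $V(x)=\|x\|^2$ as a quadratic common \hodclf.

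The point of genuine difficulty is the \emph{joint} feasibility of the LMI across the two modes; the dominant-weight trick handles it uniformly by decoupling the joint LMI into two independent norm-decay conditions, each guaranteed by the Schur property of the respective closed loop. To produce an explicit certificate for $h=0.1$, I would complement this existence argument by solving the joint LMI numerically with a standard SDP package for a moderate $m$ (possibly with a more balanced weight vector), reporting the resulting $K$, $m$, and $(\sigma_1,\dots,\sigma_m)$ as a computer-verifiable quadratic common \hodclf~for the switched system.
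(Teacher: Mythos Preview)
Your proposal is correct; the overall strategy for Part~2 coincides with the paper's, while Part~1 takes a genuinely different route.

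\textbf{First claim.} The paper normalises $P=\begin{bsmallmatrix}1&q\\q&r\end{bsmallmatrix}$, expands the descent inequality $(A_\uptheta x+Bu)^TP(A_\uptheta x+Bu)-x^TPx<0$ for each mode, \emph{adds} the two inequalities, and then evaluates the sum at the special vector $y=[-r,\,q]^T$ for which $\mathcal{B}^TPy=0$; after the cancellations induced by $\mathcal{A}_{-1}=-\mathcal{A}_1$ what remains is a sum of non-negative quadratic terms, yielding the contradiction. Your approach---closed-form minimisation over $u$ (exploiting that $B$ has rank one), reduction to positive definiteness of an explicit $2\times2$ matrix, and adding the scalar determinant conditions $2\uptheta p_{12}>hp_{11}$ for $\uptheta=\pm1$---is more elementary and keeps the dependence of $u$ on $\uptheta$ completely explicit (you minimise separately for each mode). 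The paper's argument avoids your algebra but hinges on spotting the right test vector; yours is self-contained calculus.

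\textbf{Second claim.} The paper likewise invokes the LMI \eqref{eqn:fullLMI1}--\eqref{eqn:fullLMI3}, but with \emph{mode-dependent} stabilising gains $K_1,K_{-1}$, and then exhibits a numerically computed common weight vector for $m=10$ (Lemma~\ref{lem:commonhodclf}). Your choice of a \emph{common} gain $K=[0,k_2]$ is a neat simplification: with $k_1=0$ the two closed loops $A_{\pm1}+BK$ share the same characteristic polynomial (indeed they are conjugate via $\operatorname{diag}(1,-1)$, hence have identical spectral norms), so the two LMI blocks become identical and the dominant-weight argument of Theorem~\ref{thm:mainthm} settles both at once. This yields a clean existence proof for sufficiently large $m$, whereas the paper supplies an explicit numerical certificate at $m=10$; either form is acceptable for the stated theorem.
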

We prove this result through two lemmata.
\begin{lemma}\label{thm:nocommonclLF}
The switched system~\eqref{eqn:switchedlinsystem} does not admit a quadratic common control Lyapunov function.
\end{lemma}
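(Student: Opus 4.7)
The plan is to assume for contradiction that $V(x) = x^{T}Px$ with $P = \begin{bmatrix} p_{11} & p_{12} \\ p_{12} & p_{22} \end{bmatrix} \succ 0$ is a quadratic common control Lyapunov function for~\eqref{eqn:switchedlinsystem}, and to derive a strict contradiction by explicit LMI manipulation. The crucial structural feature is that the input enters through $B = h\,e_{2}$, a rank-one matrix: for each mode $\uptheta \in \{-1,1\}$ and state $x$, the inner minimization $\min_{u\in\mathbb{R}} V(A_{\uptheta}x+Bu)$ is a one-variable quadratic in $u$ and can be solved in closed form. Since $u$ freely prescribes the second coordinate of the next state, the minimum should depend only on $(A_{\uptheta}x)_{1} = x_{1} + \uptheta h x_{2}$. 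Using the standard identity $\min_{u} V(w+Bu) = V(w) - (B^{T}Pw)^{2}/(B^{T}PB)$ with $w = A_{\uptheta}x$, a short calculation yields the clean expression
\[
\min_{u} V(A_{\uptheta}x + Bu) \;=\; \frac{\det P}{p_{22}}\,(x_{1} + \uptheta h x_{2})^{2}.
\]
Setting $c := \det P / p_{22} > 0$, the CLF descent condition then reduces to positive-definiteness of the two symmetric matrices $M_{\uptheta} := P - c\,v_{\uptheta}v_{\uptheta}^{T}$ with $v_{\uptheta} = (1,\uptheta h)^{T}$, for both $\uptheta \in \{-1,1\}$.

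I would next unpack these two LMIs via principal minors. The $(1,1)$ entries of $M_{\pm 1}$ coincide and equal $p_{11}-c$, which simplifies via $cp_{22} = p_{11}p_{22} - p_{12}^{2}$ to $p_{12}^{2}/p_{22}$; positivity of this entry forces $p_{12} \neq 0$. The two determinant conditions then read
\[
(p_{11}-c)(p_{22}-ch^{2}) \;>\; (p_{12} - \uptheta c h)^{2}, \qquad \uptheta = \pm 1.
\]

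To close the argument I would sum these two inequalities. The right-hand side collapses by $(a-b)^{2}+(a+b)^{2} = 2(a^{2}+b^{2})$ to $2(p_{12}^{2} + c^{2}h^{2})$, while substituting $cp_{22} = p_{11}p_{22} - p_{12}^{2}$ on the left simplifies it to $2\bigl(p_{12}^{2} - ch^{2}p_{12}^{2}/p_{22}\bigr)$. The resulting necessary condition is
\[
-\,\frac{c h^{2} p_{12}^{2}}{p_{22}} \;>\; c^{2} h^{2},
\]
a strictly negative quantity exceeding a strictly positive one --- impossible, since $c,h,p_{22},p_{12}^{2} > 0$. The argument is essentially algebraic, so the only real obstacle is bookkeeping in the determinant expansion and the substitution $cp_{22} = p_{11}p_{22}-p_{12}^{2}$. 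A pleasant byproduct of the approach is that the parameter $\alpha$ plays no role whatsoever: $\alpha$ appears only in the second coordinate of $A_{\uptheta}x$, and the scalar input $u$ absorbs any contribution from that coordinate entirely. In other words, the obstruction is purely the sign flip in the first row of $A_{\uptheta}-I$, which is what the symmetric pair $v_{+1}, v_{-1}$ encodes.
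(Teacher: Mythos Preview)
Your argument is correct and proceeds along a somewhat different line than the paper. You first eliminate the control explicitly, using the rank-one structure of $B$ to obtain $\min_u V(A_\theta x + Bu) = c\,(x_1+\theta h x_2)^2$ with $c=\det P/p_{22}$; this reduces the CLF condition to the pair of matrix inequalities $M_\theta=P-c\,v_\theta v_\theta^T\succ 0$, and summing their $2\times 2$ determinant conditions yields the contradiction. The paper instead keeps $u$ as a free variable, writes the two descent inequalities, adds them (using $\mathcal{A}_{-1}=-\mathcal{A}_1$ to cancel the first-order terms), and then evaluates at the special point $y=(-r,q)^T$ lying in the kernel of $\mathcal{B}^T P$, which annihilates the term linear in $u$ and leaves only nonnegative quantities. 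Both proofs ultimately exploit the same sign-flip symmetry between the two modes; the paper's route is shorter and more geometric, while yours is more computational but has two advantages: it handles cleanly the fact that the control in the CLF definition may depend on the mode~$\theta$ (the paper's addition step writes a single symbol $u$ for both inequalities), and it makes transparent that the parameter $\alpha$ plays no role, since it affects only the second coordinate of $A_\theta x$, which the scalar input absorbs entirely.
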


\begin{proof}
The linear system~\eqref{eqn:switchedlinsystem} can be written more compactly as
\begin{align}
    x^{k+1} &= A_{\uptheta}x^k + Bu^k, \quad \uptheta \in \{-1,1\}\label{eqn:systemA1B1}
    \intertext{with }
    A_1 &= \begin{bmatrix}
        1 & h\\
        -1.5h & 1
    \end{bmatrix}, \,
    A_{-1} = \begin{bmatrix}
        1 & -h\\
        1.5h & 1
    \end{bmatrix}, \,
    B = h\begin{bmatrix}
        0 \\ 1
    \end{bmatrix}.\nonumber
\end{align}
\noindent For later use, we also introduce the matrices
\begin{align*}
\mathcal{A}_1 &= \begin{bmatrix}
        0 &1\\
        -1.5 & 0
    \end{bmatrix}, \, 
\mathcal{A}_{-1} = \begin{bmatrix}
        0 & -1\\
        1.5 & 0
    \end{bmatrix}, \,
\mathcal{B}= \begin{bmatrix}
        0\\1
    \end{bmatrix}.
\end{align*}
By way of contradiction, let us assume that a quadratic common control Lyapunov function $V(x)=x^TPx$, with a positive definite matrix $P \in \mathbb{R}^{n,n}$, exists. Without loss of generality, the matrix $P$ has the form 
\begin{align*}
    P = \begin{bmatrix}
        1 & q\\ q & r
    \end{bmatrix}.
\end{align*}
Since $\mathcal{B} = [ 0, \,\, 1]^T$, we have that $\mathcal{B}^{T}Py = 0$ for $y = [-r, \,\, q]^T$.
The function $V$ needs to satisfy the descent property~\eqref{eqn:Lyap_conditionw}
\begin{align*}
    (A_{\uptheta}x + Bu)^TP(A_{\uptheta}x + Bu) - x^TPx &<0
 \end{align*}   
 with one common $P$ for both $\uptheta = -1$ and $\uptheta = 1$.
 Expanding the above inequality yields \begin{align*}x^TA_{\uptheta}^TPA_{\uptheta}x + 2uB^TPA_{\uptheta}x + uB^TPBu - x^TPx &<0.
\end{align*}
We rewrite the above inequality by using $A_{\uptheta} = I+h\mathcal{A_{\uptheta}}$ and $B = h\mathcal{B}$
\begin{align*}
x^T(I+h\mathcal{A_{\uptheta}})^TP(I+h\mathcal{A_{\uptheta}})x + 2hu\mathcal{B}^{ T}P(I+h\mathcal{A_{\uptheta}})x 
    + h^2u\mathcal{B}^{T}P\mathcal{B}u - x^TPx &< 0 \\
\Rightarrow \, x^T(P + h\mathcal{A_{\uptheta}}^{T}P + hP\mathcal{A_{\uptheta}} + h^2\mathcal{A_{\uptheta}}^{T}P\mathcal{A_{\uptheta}} - P)x 
    + 2hu\mathcal{B}^{T}P(I + h\mathcal{A_{\uptheta}})x +  h^2u\mathcal{B}^{T}P\mathcal{B}u &< 0\\
\Rightarrow   \, hx^T(\mathcal{A_{\uptheta}}^{T}P + P\mathcal{A_{\uptheta}})x + h^2x^T\mathcal{A_{\uptheta}}^{T}P\mathcal{A_{\uptheta}}x 
    + 2hu\mathcal{B}^{T}P(I + h\mathcal{A}_{\uptheta})x +  h^2u\mathcal{B}^{T}P\mathcal{B}u &< 0.
\end{align*}
By dividing by $h>0$, this is equivalent to
\begin{align*}
&x^T(\mathcal{A}_{\uptheta}^{T}P + P\mathcal{A}_{\uptheta})x + hx^T\mathcal{A}_{\uptheta}^{T}P\mathcal{A}_{\uptheta}x 
     + 2u\mathcal{B}^{T}P(I + h\mathcal{A}_{\uptheta})x + hu\mathcal{B}^{T}P\mathcal{B}u<0.
\end{align*}
Written in two equations with $\uptheta = +1 $ and $\uptheta = -1$, this reads
\begin{align*}
x^T(\mathcal{A}_{1}^TP + P\mathcal{A}_{1})x + hx^T\mathcal{A}^T_{1} P \mathcal{A}_{1} x + 2u \mathcal{B}^TP(I + h\mathcal{A}_{1})x + hu \mathcal{B}^TP\mathcal{B}u &< 0 \\
x^T(\mathcal{A}_{-1}^TP + P\mathcal{A}_{-1})x + hx^T\mathcal{A}^T_{-1} P \mathcal{A}_{-1} x + 2u \mathcal{B}^TP(I + h\mathcal{A}_{-1})x + hu \mathcal{B}^TP\mathcal{B}u &< 0.
\end{align*}
If both of these expressions are negative, then their sum must also be negative, i.e.
\begin{align*}
&x^T(\mathcal{A}_{1}^TP + P\mathcal{A}_{1})x + hx^T\mathcal{A}^T_{1} P \mathcal{A}_{1}x + 2u \mathcal{B}^TP(I + h\mathcal{A}_{1})x + \\
&\quad hu \mathcal{B}^TP\mathcal{B}u + 
x^T(\mathcal{A}_{-1}^TP + P\mathcal{A}_{-1})x + hx^T\mathcal{A}^T_{-1} P \mathcal{A}_{-1} x +\\
& \quad 2u \mathcal{B}^TP(I + h\mathcal{A}_{-1})x + hu \mathcal{B}^TP\mathcal{B}u < 0.
\intertext{Since $\mathcal{A}_{-1} = -\mathcal{A}_{1} $, it holds}
&x^T(\mathcal{A}_{1}^TP + P\mathcal{A}_{1})x + hx^T\mathcal{A}^T_{1} P \mathcal{A}_{1} x + 2u \mathcal{B}^TPx + \\
& \quad 2hu \mathcal{B}^TP\mathcal{A}_{1}x + hu \mathcal{B}^TP\mathcal{B}u  
-x^T(\mathcal{A}_{1}^TP + P\mathcal{A}_{1})x + \\
& \quad hx^T\mathcal{A}^T_{1} P \mathcal{A}_{1} x + 2u \mathcal{B}^TPx - 2hu \mathcal{B}^TP\mathcal{A}_{1}x +hu \mathcal{B}^TP\mathcal{B}u < 0.
\intertext{After cancellations, we obtain}
&2hx^T\mathcal{A}^T_{1} P \mathcal{A}_{1} x + 4u \mathcal{B}^TPx + 2hu \mathcal{B}^TP\mathcal{B}u < 0.
\end{align*}
If we now sub in $y = [-r, \, \, q]^T$ for $x$, the term $4u \mathcal{B}^TPy$ equals zero. This yields 
\begin{align*}
2hy^T\mathcal{A}^T_{1} P \mathcal{A}_{1} y + 2hu \mathcal{B}^TP\mathcal{B}u < 0.
\end{align*}
Since $P$ is positive definite, this is a contradictory statement.
To summarize, the existence of a quadratic common control Lyapunov function has led us to a contradiction, which means that there cannot exist such a Lyapunov function.
\end{proof}
\begin{lemma}\label{lem:commonhodclf}
The switched system~\eqref{eqn:switchedlinsystem} admits a quadratic common \hodclf~for $h=0.1$.    
\end{lemma}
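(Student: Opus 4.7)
The plan is to reduce the coupled LMI problem for the two subsystems to a single LMI solvable by Theorem~\ref{thm:mainthm}, by exploiting a reflection symmetry between the two closed-loop dynamics. I would take the candidate $V(x)=\|x\|^2$ (i.e., $P=I$) and the involution $S:=\operatorname{diag}(1,-1)$. A direct computation gives $S\mathcal{A}_1 S=\mathcal{A}_{-1}$ and $SB=-B$, whence $SA_1 S=A_{-1}$. Since $(A_1,B)$ is controllable for every $h>0$ (the determinant of $[B,\,A_1 B]$ equals $-h^3$), I would pick any stabilizing feedback $K_1$ for $(A_1,B)$ and define $K_{-1}:=-K_1 S$. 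A short calculation then yields $S(A_1+BK_1)S=A_{-1}+BK_{-1}$, and from $S^2=I$ one obtains by induction $S(A_1+BK_1)^j S=(A_{-1}+BK_{-1})^j$ for every $j\geq 1$.

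Consequently, the prediction matrices of Section~\ref{sec:avenuelmi} satisfy $\Phi_{-1}=\tilde S\,\Phi_1\,S$ with $\tilde S:=I_m\otimes S$. For the block-diagonal weight matrix $\Lambda=\operatorname{diag}(\tfrac{\sigma_1}{m}I,\ldots,\tfrac{\sigma_m}{m}I)$, the matrix $\tilde S$ commutes with $\Lambda$ and $\tilde S^2=I$, so
$$\Phi_{-1}^T\Lambda\Phi_{-1}=S\,\Phi_1^T\Lambda\Phi_1\,S,\qquad -\Phi_{-1}^T\Lambda\Phi_{-1}+(1-\varepsilon)I=S\bigl[-\Phi_1^T\Lambda\Phi_1+(1-\varepsilon)I\bigr]S.$$
Since conjugation by the orthogonal involution $S$ preserves positive semidefiniteness, the LMI~\eqref{eqn:fullLMI1} for $\uptheta=-1$ is equivalent, under the same $m$ and the same weights, to that for $\uptheta=1$. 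The remaining constraints~\eqref{eqn:fullLMI2}--\eqref{eqn:fullLMI3} do not involve $\uptheta$, so the coupled LMI collapses to a single LMI in $\Lambda$.

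Next I would apply Theorem~\ref{thm:mainthm} to $(A_1,B,K_1)$ to obtain some $m\in\mathbb{N}_{>0}$ and non-negative $\sigma_1,\ldots,\sigma_m$ satisfying~\eqref{eqn:fullLMI1}--\eqref{eqn:fullLMI3}. By the reduction above, the same $m$ and $\sigma_i$ satisfy the analogous LMI for $\uptheta=-1$. Hence for each $\uptheta\in\{-1,1\}$ and each $x^0\in\mathbb{R}^2$, the linear feedback sequence $u^j=K_\uptheta x^j$ realizes the ADC~\eqref{eqn:adc_q0} for $V(x)=\|x\|^2$, showing that $V$ is a common \hodclf~for the switched system~\eqref{eqn:switchedlinsystem}. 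The value $h=0.1$ would enter only when displaying explicit numerical values of $K_1$, $m$, and the $\sigma_i$ via an LMI solver, as in Section~\ref{sec:simulations}; the symmetry-based argument itself applies for every $h>0$.

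The main obstacle will be the algebraic bookkeeping around the Kronecker-structured similarity: verifying that the particular block-diagonal form of $\Lambda$ (with scalar blocks) is exactly what makes $\tilde S$ commute with $\Lambda$, and that the feedback choice $K_{-1}=-K_1 S$ is precisely the one that transports the closed-loop matrix of subsystem $1$ into that of subsystem $-1$. Once this symmetry is identified, the lemma follows immediately from Theorem~\ref{thm:mainthm}; a purely computational alternative, in case one does not wish to invoke the symmetry, is to feed the two LMIs~\eqref{eqn:fullLMI1}--\eqref{eqn:fullLMI3} (one per subsystem, sharing $\Lambda$) into an SDP solver at $h=0.1$ and read off an explicit solution.
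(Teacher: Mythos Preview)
Your argument is correct and takes a genuinely different route from the paper. The paper's proof is purely computational: it exhibits explicit stabilizing gains $K_1=[-5.4017,\,-7.0985]$, $K_{-1}=[5.4017,\,-7.0985]$ and explicit weights $\sigma_1,\ldots,\sigma_{10}$ (obtained from an SDP solver at $h=0.1$), and then simply states that one can verify that these numbers satisfy the coupled LMI~\eqref{eqn:fullLMI1}--\eqref{eqn:fullLMI3} for both $\uptheta=1$ and $\uptheta=-1$. Your approach instead exploits the orthogonal involution $S=\operatorname{diag}(1,-1)$ to show that the two closed-loop matrices are $S$-conjugate (and, amusingly, the paper's numerical gains do satisfy exactly $K_{-1}=-K_1S$, so the symmetry was present but unstated). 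This collapses the coupled LMI to a single one and lets you invoke Theorem~\ref{thm:mainthm} abstractly, yielding existence of a quadratic common \hodclf~for \emph{every} $h>0$, not just $h=0.1$. What the paper's approach buys is concreteness: explicit weights that can be plugged directly into Problem~\ref{problem:simulation}. What your approach buys is an explanation of \emph{why} the common \hodclf~exists, a proof that does not rely on numerical verification, and a strictly stronger conclusion. Your bookkeeping around $\tilde S=I_m\otimes S$ commuting with $\Lambda$ is exactly right because each diagonal block of $\Lambda$ is a scalar multiple of the identity.
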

\begin{proof}
It can be readily verified that for $m=10$ the weights\footnote{The weights can be found through MATLAB and the {\tt sedumi} package  by solving the coupled LMI 
\begin{align*}
    -\Phi_1^T \Lambda \Phi_{1} + I - \varepsilon I &\succeq 0\\
    -\Phi_{-1}^T \Lambda \Phi_{-1} + I - \varepsilon I &\succeq 0\\
    \Lambda &\succeq 0\\
    \trace(\Lambda)&\geq n = 2.
\end{align*}
}
\begin{align}\label{eqn:weights-switched-sys}
&\sigma_1 = 0.0644, \sigma_2 = 0.0570, \sigma_3 = 0.0589, \sigma_4 = 0.0655,\nonumber\\
&\sigma_5 = 0.0775, \sigma_6 = 0.0959, \sigma_7 = 0.1227, \sigma_8 = 0.1646,\\
&\sigma_9 = 0.2488, \sigma_{10} = 0.5447.\nonumber
\end{align} 
satisfy~\eqref{eqn:fullLMI1},~\eqref{eqn:fullLMI2},~\eqref{eqn:fullLMI3} simultaneously for system~\eqref{eqn:switchedlinsystem} with $\uptheta =1$ and system~\eqref{eqn:switchedlinsystem} with $\uptheta=-1$. This can be seen by using the stabilizing matrices 
\begin{align*}
K_1 =  [-5.4017 \,\,\, -7.0985], \,\, K_{-1} =  [5.4017 \,\,\,  -7.0985]    
\end{align*}
for~\eqref{eqn:switchedlinsystem} with $\uptheta =1$ and $\uptheta =-1$ and building $\Phi_1$ and $\Phi_{-1}$ consisting of powers of $(A_1 + BK_1)$ and $(A_{-1} + BK_{-1})$, respectively.
Since the above weights satisfy~\eqref{eqn:fullLMI1},~\eqref{eqn:fullLMI2},~\eqref{eqn:fullLMI3} for the switched system, i.e. system~\eqref{eqn:switchedlinsystem} with $\uptheta = 1$ and $\uptheta = -1$, it admits a common \hodclf, namely $V(x) = \|x\|^2$. 
\end{proof}

We now illustrate how the quadratic common \hodclf~facilitates stabilization by solving the optimal control Problem~\ref{problem:simulation} within the flexible-step MPC scheme (Algorithm~\ref{algo:ho-dclf}). The weights for \adc~are given by~\eqref{eqn:weights-switched-sys}, the initial state is $\begin{bmatrix}
    4 &5
\end{bmatrix}^T$, the prediction horizon is $N_p=10$ and we utilize the solver {\tt fmincon} from MATLAB.  

\begin{problem}\label{problem:simulation}
\begin{align*}
&\min \sum_{j = 0}^{N_p-1}\|x^j\|^2 + 5\|u^j\|^2\\
& \, \mathrm{s.t.} \,x^{j+1} = x^j + h \begin{bmatrix} 0 \\ 1\end{bmatrix} u^j + \left\{\begin{array}{l}
\hspace*{-0.2cm}h\begin{bmatrix}
    0 & 1\\
    -0.5 & 0 \end{bmatrix} x^j,
 \, \text{if } k+j \ \mathrm{even}\\
 \hspace*{-0.2cm}- h
\begin{bmatrix}
    0 & 1\\
    -0.5 & 0 \end{bmatrix} x^j \hfill \mathrm{otherwise}
\end{array}\right.\\
&\hspace{0.8cm}\,x^0=x\\
&\hspace{0.8cm}\,\frac{1}{N_p}(\sigma_{N_p}\|x^{N_p}\|^2+\dots + \sigma_1\|x^1\|^2) - \|x^0\|^2 \leq \hspace*{-0.1cm} -\varepsilon \|x^0\|^2 
\end{align*}
\end{problem}
Here, $k$ is the time step of the closed-loop system. In step four of Algorithm~\ref{algo:ho-dclf}, we find the \emph{first} index $\ell_{decr}\in \{1,\dots,N_p\}$ which achieves a descent for the \hodclf~$V(x) = \|x\|^2$. To achieve a descent, we choose $\varepsilon = 10^{-5}$ for the right-hand side of \adc. The state trajectories of the closed-loop system are displayed in Figure~\ref{fig:states_flexMPC}. It is evident, that after a transient phase, the closed-loop system was successfully stabilized to the origin. In Figure~\ref{fig:implsteps_flexMPC}, the number of implemented steps in each optimization instance is displayed. We see that in multiple optimization instances, the solver makes use of more than one step of the computed control sequence.

It is worth pointing out that our scheme works for any arbitrary switching pattern (which must be known for the next $ m $ time steps so that our scheme can be implemented).

\begin{figure}
    \centering
    \includegraphics[width=0.7\linewidth]{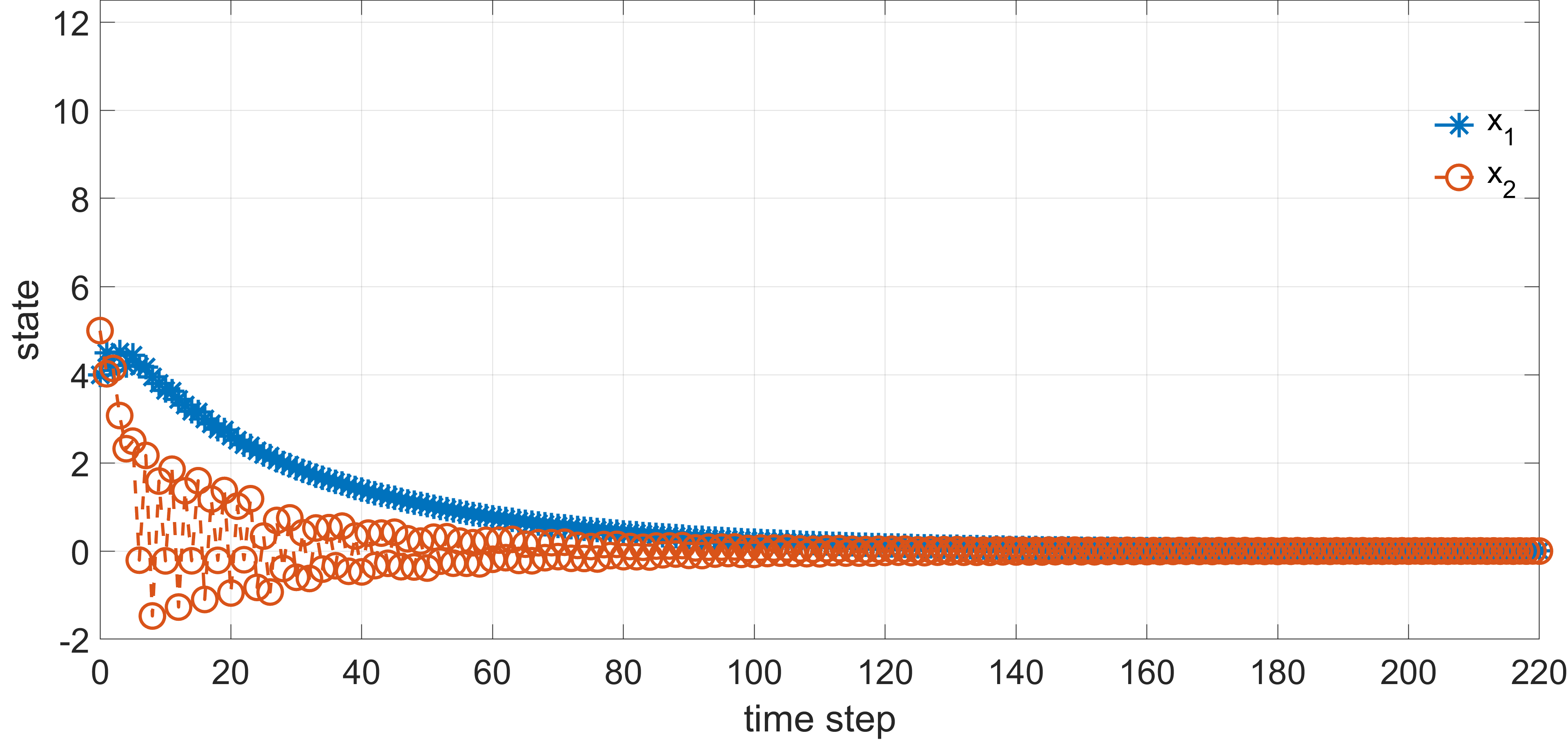}
    \caption{State trajectories according to the solution of Problem~\ref{problem:simulation}}
    \label{fig:states_flexMPC}
\end{figure}
\begin{figure}
    \centering
    \includegraphics[width=0.7\linewidth]{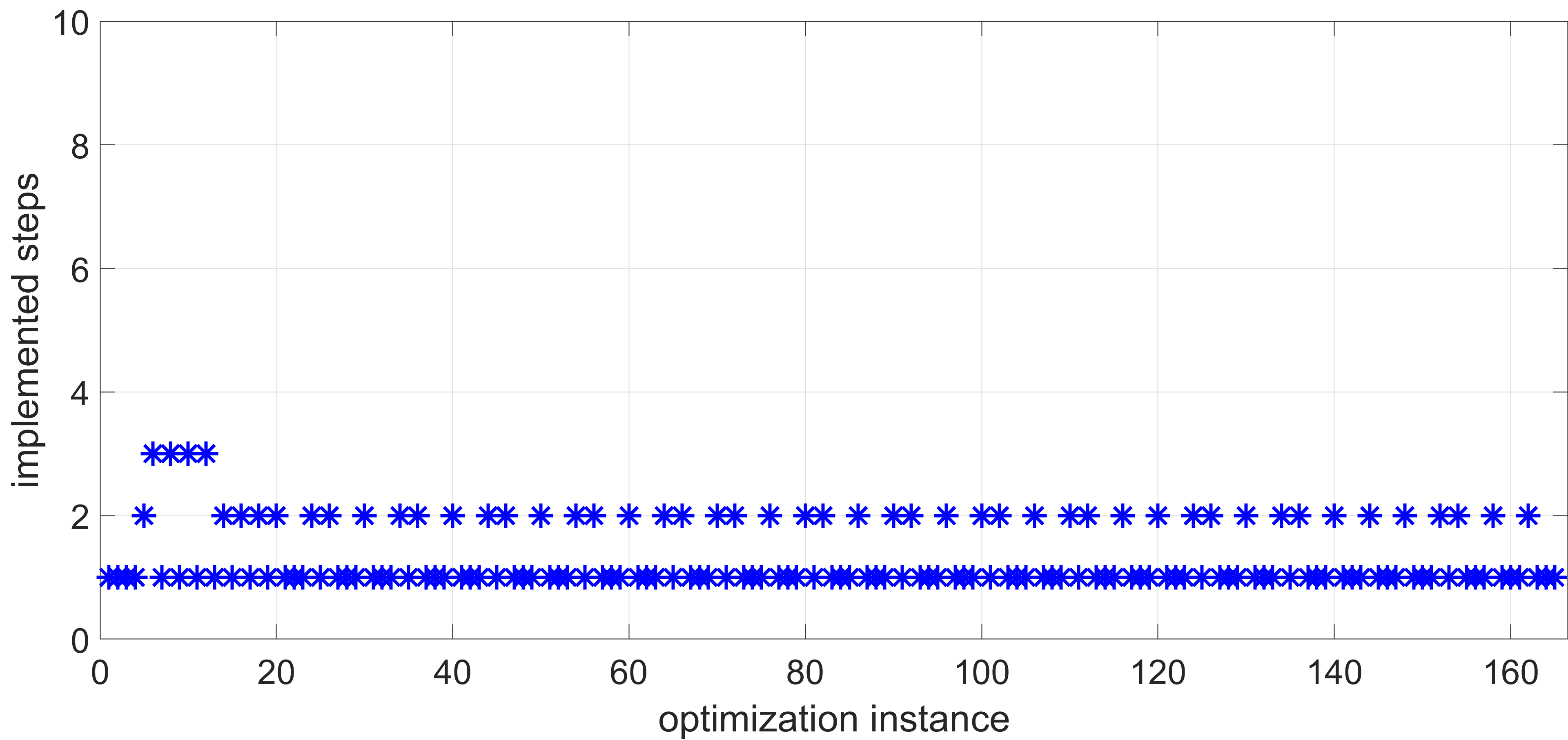}
    \caption{In each optimization instance, Problem~\ref{problem:simulation} is solved. The solution yields the number of implemented steps, depicted on the vertical axis. Note that the sum of implemented steps of the optimization instances corresponds to the time step of the implementation. }
    \label{fig:implsteps_flexMPC}
\end{figure}
\begin{figure}
    \centering
    \includegraphics[width=0.7\linewidth]{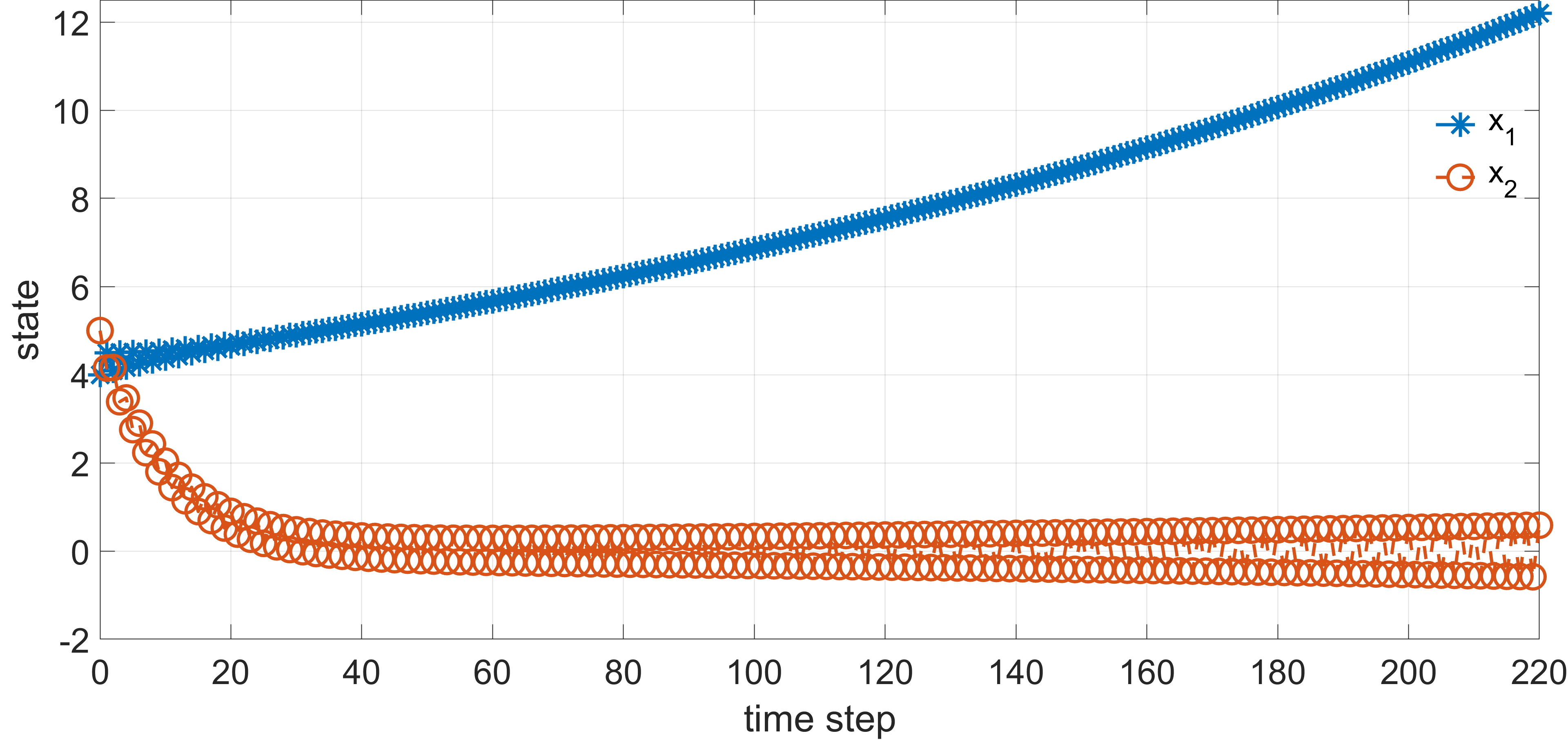}
    \caption{State trajectories according to standard MPC with the terminal cost $\phi(x^{N_p})=480(x^{N_p})^Tx^{N_p}$}
    \label{fig:states-stdMPC-480}
\end{figure}

We compare the results from flexible-step MPC with stan-
dard MPC. In standard MPC, the cost
\begin{align*}
    \sum_{j=0}^{N_p-1} \|x^j\|^2 + 5\|u^j\|^2 + \phi(x^{N_p})
\end{align*}
is optimized in each iteration. In other words, the stabilizing ingredient
of a quadratic terminal cost $\phi: \mathbb{R}^n \to \mathbb{R}_{\geq 0}$ is used.
As it is unclear how to choose the terminal cost systematically, we have tried three different terminal costs, i.e. $    \phi(x^{N_p}) = (x^{N_p})^TP_1x^{N_p}, 
    \phi(x^{N_p}) = (x^{N_p})^TP_{-1}x^{N_p}$ 
and $\phi(x^{N_p}) = 480(x^{N_p})^Tx^{N_p}$,
where $P_1$ and $P_{-1}$ are the solutions of the Riccati equation for the $\uptheta = 1$ and $\uptheta = -1$ dynamics~\eqref{eqn:switchedlinsystem}, respectively.
As expected, standard MPC fails to stabilize the state with all of these terminal costs. This is depicted for one of the terminal costs, e.g. $\phi(x^{N_p}) = 480(x^{N_p})^Tx^{N_p}$, in Figure~\ref{fig:states-stdMPC-480}, where the state trajectories are shown according to standard MPC.  


\section{Conclusion}\label{sec:conclusion}
In this paper, we showed that the average decrease constraint of the recently introduced framework of flexible-step MPC can be verified using an LMI in the case of linear systems. As a key consequence, we were able to find a quadratic common \emph{generalized} Lyapunov function for a  
switched linear control system, where the existence of quadratic common Lyapunov functions is not guaranteed. Using this, we showcased an exemplary optimal control problem with a switched system, where flexible-step MPC can overcome challenges faced in standard MPC. 
In the future, we will explore applications to robotics and power systems, where switched systems naturally appear and create challenges that we believe can be overcome by our method.


\end{document}